\definecolor{darkgrey}{gray}{0.4}
\newtheorem{result}{\ }[section]
\theoremstyle{changebreak}                
\newtheorem{thm}[result]{Theorem}
\newtheorem{lem}[result]{Lemma}
\newtheorem{cor}[result]{Corollary}
\newtheorem{prop}[result]{Proposition}
\newtheorem{eg}[result]{Example}
\newenvironment{proof}
 {{\sl Proof.}\hspace*{1 ex}}%
 {{\nopagebreak\hspace*{\fill}$\Box$\par\vspace{12pt}}}
\newcommand{\qed}{\hfill \ensuremath{\Box}}
\begin{document}

\thispagestyle{empty}
\begin{center} 

{\LARGE Using the Johnson-Lindenstrauss lemma in linear and integer programming}
\par \bigskip
{\sc Vu Khac Ky\footnote{Supported by a Microsft Research Ph.D.~fellowship.}, Pierre-Louis Poirion, Leo Liberti} 
\par \bigskip
{\it LIX, \'Ecole Polytechnique, F-91128 Palaiseau,
France} \\ Email:\url{{vu,poirion,liberti}@lix.polytechnique.fr}
\par \medskip \today
\end{center}
\par \bigskip

\date{\today}

\begin{abstract} 
The Johnson-Lindenstrauss lemma allows dimension reduction on real
vectors with low distortion on their pairwise Euclidean
distances. This result is often used in algorithms such as $k$-means
or $k$ nearest neighbours since they only use Euclidean distances, and
has sometimes been used in optimization algorithms involving the
minimization of Euclidean distances. In this paper we introduce a
first attempt at using this lemma in the context of feasibility
problems in linear and integer programming, which cannot be expressed
only in function of Euclidean distances. 
\end{abstract}

\section{Introduction}

In machine learning theory there is a wonderful and deep result from
functional analysis and geometric convexity called the {\it
  Johnson-Lindenstrauss Lemma} (JLL) \cite{jllemma}. Intuitively, this
lemma employs a concentration of measure \cite{ledoux2005} argument to
prove that a ``cloud'' of high-dimensional points can be projected in
a much lower dimensional space whilst keeping Euclidean distances
approximately the same. Although this result was previously exploited
in purely Euclidean distance based algorithms such as $k$-means
\cite{boutsidis2010} and $k$ nearest neighbours \cite{Indyk} (among
others), it has not often been applied to optimization problems. There
are a few exceptions, namely high dimensional linear regression
\cite{pilanci2014}, where the application of the JLL is reasonably
natural.

In this paper we present some new results on the application of the
JLL to establish the feasibility of Linear Programs (LP) and Integer
Linear Programs (ILP). We consider problems with $m$ constraints
(where $m$ is large), and reduce $m$ by projection on a random
subspace, while ensuring that, with high probability, the reformulated
problem is feasible if and only if the original problem is feasible.

The geometrical intuition underlining our idea stems from the cone
interpretation of LP feasibility. Let $P$ be a feasibility-only LP in
standard form (i.e.~all inequalities have been turned into equations
by the introduction of $m$ additional non-negative variables), written
as $Ax=b$ with $x\ge 0$, where $A$ is an $m\times n$ rational matrix,
$b\in\mathbb{R}^m$ is a rational vector, and $x$ is a vector of $n$
decision variables (which might be continuous or integer). Then $P$
can be interpreted as the following geometric decision problem: given
a cone spanned by the columns of $A$, is $b\in\mbox{\sf cone}(A)$ or
not? In this setting, the role of the JLL is seen to be the following:
if we project $A$ and $b$ in a smaller dimensional space, the JLL
assures us that the ``shape" of the projected cone and of the ray $b$
are approximately the same, and hence that the answer to the problem
will be the same most of the times.


In Section 2, we formally define the problem. In Section 3, we recall
the Johnson-Lindenstrauss Lemma and prove some results linked to its
application to the ILP case. In Section 4, we derive results
concerning LP feasibility when the cone generated by the matrix $A$ is
pointed. In Section 5, we generalize the previous results, proving
that the distance between a point and a closed set should be
approximately preserved. Finally in Section 6, we present some
computational results.

\section{Linear programming and the cone membership problem}
It is well-known that any linear program can be reduced (via an easy
bisection argument) to LP feasibility, defined as follows:
\begin{quote}
{\sc Linear Feasibility Problem (LFP)}. 
Given $b \in \mathbb{R}^m$ and $A \in \mathbb{R}^{m \times n}$. Decide
whether there exists $x \in \mathbb{R}^n$ such that $Ax = b\land x \ge 0$.
\end{quote}
We assume that $m$ and $n$ are very large integer numbers. We also
assume that $A$ is full row-rank. In particular, we have $m \le n$,
since otherwise we can find $x$ uniquely from $Ax = b$ by taking the
left inverse of $A$.

LFP problems can obviously be solved using the simplex
method. Despite the fact that simplex methods are often very efficient
in practice, there are instances for which the methods run in
exponential time. On the other hand, polynomial time algorithms such
as interior point methods are known to scale poorly, in practice, on
several classes of instances.

If $a_1,\ldots,a_n$ are the column vectors of $A$, then the LFP is
equivalent to finding $x \ge 0$ such that $b$ is a non-negative linear
combination of $a_1,\ldots,a_n$. In other words, the LFP
is equivalent to the following cone membership
problem:
\begin{quote}
  {\sc Cone Membership (CM)}.  Given $b,a_1,\ldots,a_n \in
  \mathbb{R}^{m}$, decide whether $b \in \mbox{\sf cone}
  \{a_1,\ldots,a_n\}$.
\end{quote}

This problem can be viewed as a special case of the \emph{restricted
  linear membership problem}, which is defined as follows:
\begin{quote} {\sc Restricted Linear Membership
    (RLM)}. Given $b,a_1,\ldots,a_n \in \mathbb{R}^{m}$ and $X
  \subseteq \mathbb{R}^n$, decide whether $b \in \mbox{\sf
    lin}_X(a_1,\ldots,a_n)$, i.e.~whether $\exists\lambda\in X$
  s.t.~$b=\sum\limits_{i=1}^{n}\lambda_i a_i$.
\end{quote}
For example, when $X = \mathbb{R}^n_+$, we have the cone membership
problem; and when $X = \mathbb{Z}^n $ (or $\{0,1\}^n$) we have the
integer (binary) cone membership problem.

It is known from the JLL (see below for an exact statement) that there
is a (linear) mapping $T:\mathbb{R}^m\to\mathbb{R}^k$, where $k\ll m$,
such that the pairwise distances between all vector pairs $(a_i,a_j)$
undergo low distortion. In other words, the complete graph on
$\{a_1,\ldots,a_n\}$ weighted with the pairwise Euclidean distances
realized in $\mathbb{R}^m$ can also be approximately realized in
$\mathbb{R}^k$. We are now stipulating that such a graph is a
reasonable representation of the intuitive notion of ``shape''. Under
this hypothesis, it is reasonable to expect that the image of
$C=\mbox{\sf cone}(a_1,\ldots,a_n)$ under $T$ has approximately the
same shape as $C$. Thus, given an instance of CM, we expect to be able
to ``approximately solve'' a much smaller (randomly projected)
instance instead. Notice that since CM is a decision problem,
``approximately'' really refers to a randomized algorithm which is
successful with high probability.

Notationwise, every norm $\|\cdot\|$ is Euclidean unless otherwise
specified, and we shall denote by $E^{\mbox{\sf\scriptsize c}}$ the
complement of an event $E$.

\section{Random projections and RLM problems}
\label{section2}
The JLL is stated as follows:
\begin{thm}[Johnson-Lindenstrauss Lemma \cite{jllemma}] \label{J-L lemma}
Given $\varepsilon \in (0,1)$ and $A=\{a_1,\ldots,a_n\}$ be a set of
$n$ points in $\mathbb{R}^m$. Then there exists a mapping
$T:\mathbb{R}^m \to \mathbb{R}^k$ such that
\begin{equation}
  (1-\varepsilon)\|a_i -a_j\| \le \|T(a_i) - T(a_j)\| \le
  (1+\varepsilon)\|a_i -a_j\| \label{eq:jll}
\end{equation}
for all $1\le i,j\le n$, in which $k$ is $O(\varepsilon^{-2}\log n)$.
\end{thm}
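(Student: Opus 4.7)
The plan is to establish the existence of $T$ by a probabilistic (non-constructive) argument: I will construct $T$ at random, prove that each pairwise distance is preserved up to factor $1\pm\varepsilon$ with very high probability, and then use a union bound over all $\binom{n}{2}$ pairs to show that a single realization of $T$ works for all pairs simultaneously provided $k$ is large enough.

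For the construction, I would take $T(x) = \frac{1}{\sqrt{k}} G x$, where $G$ is a $k \times m$ matrix whose entries are i.i.d.\ standard Gaussians $N(0,1)$ (equivalently, one may use suitably scaled Rademacher $\pm 1$ entries; the Gaussian case is cleanest computationally). Since $T$ is linear, it suffices to analyse $\|T(v)\|$ for an arbitrary fixed nonzero vector $v \in \mathbb{R}^m$ and then apply the result to each of the $\binom{n}{2}$ difference vectors $v = a_i - a_j$. By rotational invariance of the Gaussian, $\|T(v)\|^2 / \|v\|^2$ has the same distribution as $\frac{1}{k}\sum_{\ell=1}^{k} Z_\ell^2$ with $Z_\ell$ i.i.d.\ $N(0,1)$, i.e.\ a scaled chi-squared random variable with mean $1$.

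The main technical step, which I expect to be the hardest part, is the concentration inequality: I need to show that
\begin{equation*}
  \Pr\Bigl[\bigl|\|T(v)\|^2 - \|v\|^2\bigr| > \varepsilon \|v\|^2\Bigr] \;\le\; 2 e^{-c\,\varepsilon^2 k}
\end{equation*}
for some absolute constant $c > 0$ and all $\varepsilon \in (0,1)$. This is a standard Chernoff-style argument applied to the moment generating function of a chi-squared variable: one computes $\mathbb{E}[e^{t(Z_\ell^2 - 1)}] = e^{-t}/\sqrt{1-2t}$ for $t < 1/2$, takes $k$-fold products by independence, and then optimises the Markov bound in $t$. The optimisation, expanded to second order in $\varepsilon$, yields the stated exponential tail with concrete constant (any $c \le 1/8$ works comfortably after handling both tails).

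Finally, I apply the concentration bound to each of the vectors $v = a_i - a_j$ (for $i < j$) and take a union bound: the probability that some pair is distorted by more than $1\pm\varepsilon$ is at most $2\binom{n}{2} e^{-c\varepsilon^2 k}$. Choosing $k \ge C \varepsilon^{-2} \log n$ with $C$ large enough (depending only on $c$) makes this probability strictly less than $1$, so a deterministic $T$ satisfying~(\ref{eq:jll}) must exist. This also shows that $k = O(\varepsilon^{-2} \log n)$ suffices, matching the claim. Two remarks: first, the argument is existence-only but automatically yields a randomised algorithm producing such $T$ with high probability; second, nothing in the proof uses the specific structure of the $a_i$, which is what makes the bound so useful later when the $a_i$ are columns of a constraint matrix in the cone-membership reformulation of LFP.
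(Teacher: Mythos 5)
Your proposal is correct and follows essentially the same route as the paper: the paper likewise derives the JLL by applying a concentration inequality for random linear maps (its Lemma \ref{randomprojectionlemma}) to each pairwise difference and taking a union bound over the $\binom{n}{2}$ pairs, concluding that $k = O(\varepsilon^{-2}\log n)$ suffices for the good event to have positive (indeed high) probability. The only difference is that you also sketch the chi-squared moment-generating-function proof of the concentration step itself, which the paper treats as a cited black-box property of sub-Gaussian random projections.
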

Thus, all sets of $n$ points can be projected to a subspace having
dimension logarithmic in $n$ (and, surprisingly, independent of
$m$), such that no distance is distorted by more than
$1+2\varepsilon$. The JLL is a consequence of a general property (see
Lemma \ref{randomprojectionlemma} below) of \emph{sub-Gaussian} random
mappings $T(x) = \sqrt{\frac{1}{k}} P\,x$ where $P$ is an
appropriately chosen matrix. Some of the most popular are:
\begin{itemize}
\item orthogonal projections on a random $k$-dimensional linear
  subspace of $\mathbb{R}^m$;
\item random $k\times m$ matrices with each entry independently drawn
  from the standard normal distribution $\mathcal{N}(0,1)$;
\item random $k\times m$ matrices with each entry independently taking
  values $+1$ and $-1$, each with probability $\frac{1}{2}$;
\item random $k\times m$ matrices with entries independently taking
  values $+1$, $0$, $-1$, respectively with probability $\frac{1}{6}$,
  $\frac{2}{3}$, $\frac{1}{6}$.
\end{itemize}
\begin{lem}[Random projection lemma] \label{randomprojectionlemma}
Let $T:\mathbb{R}^m\to\mathbb{R}^k$ be one of the above random
mappings. Then for all $\varepsilon\in(0,1)$ and all vector
$x\in\mathbb{R}^m$, we have: 
\begin{equation} \label{eqn:01}
  \mbox{\sf Prob}(\,(1-\varepsilon)\|x\|\le\|T(x)\|\le(1+\varepsilon)\|x\|\,)
  \ge 1-2e^{-\mathcal{C}\varepsilon^2 k}
\end{equation}
for some constant $\mathcal{C}>0$ (independent of $m,k,\varepsilon$).
\end{lem}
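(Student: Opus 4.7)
The plan is to reduce the problem to a single concentration-of-measure estimate for the squared norm $\|T(x)\|^2$, separately for each of the four families of random maps listed, although all four admit a common sub-Gaussian treatment. Since $T$ is linear, replacing $x$ by $x/\|x\|$ allows us to assume $\|x\|=1$, so the claim becomes the two-sided tail bound
\[
\mbox{\sf Prob}\bigl(\,\bigl|\,\|T(x)\|^2 - 1\,\bigr| > \eta\,\bigr) \le 2e^{-\mathcal{C}'\eta^2 k}
\]
for $\eta$ of order $\varepsilon$, from which the statement follows by the elementary inequalities $(1-\varepsilon)^2 \le 1-\varepsilon \le 1+\varepsilon \le (1+\varepsilon)^2$ (adjusting the constant $\mathcal{C}$).

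I would treat the Gaussian construction first because it is the cleanest. Writing $T(x) = \frac{1}{\sqrt{k}} P x$ with $P_{ij}\sim\mathcal{N}(0,1)$ i.i.d., the rotational invariance of the standard Gaussian implies that each coordinate $(Px)_i$ is distributed as $\mathcal{N}(0,\|x\|^2)=\mathcal{N}(0,1)$, and the $k$ coordinates are independent. Hence $k\|T(x)\|^2 = \sum_{i=1}^k Z_i^2$ is a $\chi^2_k$ random variable with mean $k$. A standard Chernoff argument — bound $\mbox{\sf Prob}(\sum Z_i^2 > (1+\eta)k)$ by $\inf_{t>0} e^{-t(1+\eta)k}\,\mathbb{E}[e^{t\sum Z_i^2}]$, use independence to factor the MGF into $(1-2t)^{-k/2}$, optimize in $t$, and expand $\log(1-2t)$ — yields the desired exponential bound; the lower tail is symmetric.

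For the $\pm 1$ Rademacher and the sparse $\{-1,0,1\}$ constructions (Achlioptas's matrices), each row $P_i$ has independent mean-zero entries with variance $1$ and bounded higher moments, so the linear combination $(P_i x)$ is a sub-Gaussian random variable by Hoeffding's lemma, with sub-Gaussian parameter at most a constant multiple of $\|x\|=1$. Squaring and summing the $k$ independent copies, a sub-exponential Bernstein-type inequality gives the same $e^{-\Omega(\eta^2 k)}$ concentration around the common mean $k$. For the orthogonal projection case one uses the standard duality between projecting a fixed unit vector onto a uniformly random $k$-subspace and projecting a uniformly random unit vector onto a fixed $k$-subspace; the latter has squared norm distributed as a Beta random variable concentrated around $k/m$, and after the normalization $\sqrt{m/k}$ absorbed in $T$ (or using the rescaling implicit in the statement), the same concentration inequality follows from Lévy's spherical concentration.

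The main obstacle is the Chernoff-type tail estimate itself: the core technical content is the uniform control of the moment generating function of $\|T(x)\|^2$ near zero. Once this is established in the Gaussian model, the other three models require only that one reprove the same MGF bound with worse constants, which is routine via Hoeffding/Bernstein. I would therefore write the proof carefully for the Gaussian case and merely sketch the modifications for the others, since this is sufficient to cover all the maps used later in the paper with a single constant $\mathcal{C}$.
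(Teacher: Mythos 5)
The paper never proves this lemma: it is stated as a known concentration property of sub-Gaussian random mappings and used as a black box (the standard sources being Johnson--Lindenstrauss, Dasgupta--Gupta, Indyk--Motwani, and Achlioptas), so the comparison here is with the literature rather than with an in-paper argument. Your outline is a correct reconstruction of that standard proof: normalize to $\|x\|=1$, pass to the squared norm via $(1-\varepsilon)^2\le 1-\varepsilon\le 1+\varepsilon\le(1+\varepsilon)^2$, and prove a two-sided tail bound for $\|T(x)\|^2$; the Gaussian case via the $\chi^2_k$ moment generating function and a Chernoff bound is exactly the Dasgupta--Gupta/Indyk--Motwani computation, and the Bernstein-type treatment of the Rademacher and sparse models is Achlioptas's argument. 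Two details deserve care if you write this out in full. First, ``the lower tail is symmetric'' is not literally true: the lower tail needs its own Chernoff bound with a negative exponent parameter, i.e.\ bounding $\mathbb{E}\bigl[e^{-t\sum_i Z_i^2}\bigr]$ and optimizing over $t>0$ there; the method is analogous but the constants differ, so it is a parallel computation rather than a symmetry. Second, for the sparse $\{+1,0,-1\}$ matrix exactly as listed in the paper (probabilities $\tfrac{1}{6},\tfrac{2}{3},\tfrac{1}{6}$), the entries have variance $\tfrac{1}{3}$, not $1$, so $\mathbb{E}\|T(x)\|^2=\tfrac{1}{3}\|x\|^2$ and the stated inequality would fail for small $\varepsilon$; your assumption of unit-variance rows silently supplies the $\sqrt{3}$ rescaling present in Achlioptas's actual construction (likewise the orthogonal-projection model needs the factor $\sqrt{m/k}$, which you do note). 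With those normalizations made explicit, each of the four models yields an absolute constant $\mathcal{C}$ independent of $m,k,\varepsilon$, and the proof is sound.
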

Not only can this lemma prove the existence of a mapping satisfying
conditions in the Johnson-Lindenstrauss lemma, but also it implies
that the probability of finding such a mapping is very large. Indeed,
from the random projection lemma, the probability that
Eq.~\eqref{eq:jll} holds  for all $i\not=j\le m$ is at least
\begin{equation}
  1 - 2 {m\choose 2}e^{-\mathcal{C}\varepsilon^2 k} = 1 -
  m(m-1)e^{-\mathcal{C}\varepsilon^2 k}.\label{eq:probjll}
\end{equation}
Therefore, if we want this probability to be large than, say $99.9\%$,
then simply choose any $k$ such that $\frac{1}{100m(m-1)} >
e^{-\mathcal{C}\varepsilon^2 k}$.  This means $k$ can be chosen to be $k =
\lceil\frac{\ln(1000)+2\ln(m)}{\mathcal{C}\varepsilon^2}\rceil$, which is
$O(\,\varepsilon^{-2} (\ln(m)+3.5)\,)$.

We shall also need a squared version of the random projection lemma.
\begin{lem}[Random projection lemma, squared version]
  \label{randomprojectionlemma2}
Let $T:\mathbb{R}^m \to \mathbb{R}^k$ be one of the random mappings in
Lemma \ref{randomprojectionlemma}. Then for all $\varepsilon \in
(0,1)$ and all vector $x \in \mathbb{R}^m$, we have:
\begin{equation} \label{eqn:01a}
\mbox{\sf Prob}(\,(1-\varepsilon)\|x\|^2 \le \|T(x)\|^2 \le (1+\varepsilon)\|x\|^2\,) \ge 1 - 2e^{-\mathcal{C}(\varepsilon^2-\varepsilon^3) k}
\end{equation}
for some constant $\mathcal{C} > 0$ (independent of $m,k,\varepsilon$).
\end{lem}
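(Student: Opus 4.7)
The plan is to deduce the squared version from Lemma~\ref{randomprojectionlemma} by applying it with a suitably rescaled parameter $\varepsilon'$ in place of $\varepsilon$, and then comparing exponents. Indeed, if $(1-\varepsilon')\|x\|\le\|T(x)\|\le(1+\varepsilon')\|x\|$, squaring yields $(1-\varepsilon')^2\|x\|^2\le\|T(x)\|^2\le(1+\varepsilon')^2\|x\|^2$. So it suffices to choose $\varepsilon'$ such that $(1+\varepsilon')^2\le 1+\varepsilon$ and $(1-\varepsilon')^2\ge 1-\varepsilon$; then the linear event at level $\varepsilon'$ is contained in the squared event at level $\varepsilon$, and the conclusion will follow by passing to complements.

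The natural choice is $\varepsilon':=\sqrt{1+\varepsilon}-1$, which makes the upper-bound condition an equality. The lower-bound condition $(1-\varepsilon')^2\ge 1-\varepsilon$ reduces to the elementary inequality $2\sqrt{1+\varepsilon}\le 2+\varepsilon$, equivalently $\varepsilon^2\ge 0$, so this choice works on all of $(0,1)$.

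It then remains to compare the exponent $\mathcal{C}_0\varepsilon'^2$ produced by Lemma~\ref{randomprojectionlemma} with the target $\mathcal{C}(\varepsilon^2-\varepsilon^3)$. Rationalizing, $\varepsilon'=\varepsilon/(\sqrt{1+\varepsilon}+1)$, hence $\varepsilon'^2=\varepsilon^2/(\sqrt{1+\varepsilon}+1)^2\ge\varepsilon^2/6$ on $(0,1)$, since $(\sqrt{1+\varepsilon}+1)^2\le(1+\sqrt{2})^2<6$. Because $\varepsilon^2-\varepsilon^3\le\varepsilon^2$, we get $\varepsilon'^2\ge(\varepsilon^2-\varepsilon^3)/6$, and absorbing the factor $1/6$ into the constant produces the claimed bound with $\mathcal{C}=\mathcal{C}_0/6$. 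The main point to be careful about is this algebraic coupling between $\varepsilon'$ and $\varepsilon$; note that the reduction in fact delivers the cleaner $\Theta(\varepsilon^2)$ rate, so the slightly weaker $\varepsilon^2-\varepsilon^3$ form in the statement costs nothing and leaves the proof a routine reduction to Lemma~\ref{randomprojectionlemma}.
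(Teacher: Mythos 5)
Your proof is correct. The paper itself states Lemma~\ref{randomprojectionlemma2} without proof, implicitly relying on the standard literature argument (as in Dasgupta--Gupta/Achlioptas), where the squared bound is proved \emph{directly}: one bounds the moment generating function of $\|T(x)\|^2$ (a chi-squared-type variable) and the exponent $\mathcal{C}(\varepsilon^2-\varepsilon^3)k$ emerges naturally from that computation, with the unsquared Lemma~\ref{randomprojectionlemma} then obtained as a corollary. You go in the opposite direction: taking the unsquared Lemma~\ref{randomprojectionlemma} as the primitive, you derive the squared version by the reparametrization $\varepsilon'=\sqrt{1+\varepsilon}-1$, checking the event inclusion (the squaring step is legitimate since $1-\varepsilon'>0$, so all quantities are nonnegative), verifying $(1-\varepsilon')^2\ge 1-\varepsilon$ via $\varepsilon^2\ge 0$, and comparing exponents through $\varepsilon'=\varepsilon/(\sqrt{1+\varepsilon}+1)$ and $(\sqrt{1+\varepsilon}+1)^2<6$, which costs only a factor $6$ absorbed into the unspecified constant $\mathcal{C}$. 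This buys a self-contained, purely algebraic reduction requiring no probabilistic machinery beyond what the paper already asserts, and it even shows the result holds with the cleaner exponent $\Theta(\varepsilon^2)k$, the $\varepsilon^2-\varepsilon^3$ form being an artifact of the direct moment-generating-function proofs; the only thing it sacrifices is control over the explicit constant, which the direct proof provides (e.g.\ $\mathcal{C}=\tfrac{1}{4}$) and which is irrelevant to how the lemma is used in this paper.
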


Another direct consequence of the random projection lemma is the
concentration around zero of the involved random linear projection
kernel.
\begin{cor}
Let $T:\mathbb{R}^m \to \mathbb{R}^k$ be one of the random mappings as
in Lemma \ref{randomprojectionlemma} and $0 \neq x \in
\mathbb{R}^m$. Then we have 
\begin{equation}
\mbox{\sf Prob}(T(x) \neq 0) \ge 1 -
2e^{-\mathcal{C}k}. \label{eq:cor1} 
\end{equation}
for some constant $\mathcal{C}>0$ (independent of $n,k$).
\label{cor1}
\end{cor}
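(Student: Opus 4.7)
The plan is to deduce the corollary as an almost immediate consequence of Lemma \ref{randomprojectionlemma}, by observing that a random projection that distorts the norm only mildly must in particular send a nonzero vector to a nonzero vector. Concretely, I would fix a convenient constant $\varepsilon\in(0,1)$ (for instance $\varepsilon=\tfrac12$) and instantiate Lemma \ref{randomprojectionlemma} at this $\varepsilon$ for the given vector $x$.

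Next I would translate the norm-preservation event into the nonvanishing event. On the event
\[
(1-\varepsilon)\|x\|\le\|T(x)\|\le(1+\varepsilon)\|x\|,
\]
the left-hand inequality gives $\|T(x)\|\ge(1-\varepsilon)\|x\|>0$, because $x\neq 0$ and $1-\varepsilon>0$; in particular $T(x)\neq 0$. Therefore the event $\{T(x)=0\}$ is contained in the complement of the norm-preservation event, and so by Lemma \ref{randomprojectionlemma}
\[
\mbox{\sf Prob}(T(x)=0)\le 2e^{-\mathcal{C}_0\varepsilon^2 k},
\]
where $\mathcal{C}_0$ is the constant supplied by Lemma \ref{randomprojectionlemma}.

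Finally I would absorb the now-fixed factor $\varepsilon^2$ into a new constant. Setting $\mathcal{C}:=\mathcal{C}_0\varepsilon^2>0$ (which depends only on $\mathcal{C}_0$ and the chosen $\varepsilon$, hence not on $n,k$), the above inequality reads $\mbox{\sf Prob}(T(x)=0)\le 2e^{-\mathcal{C}k}$, which yields \eqref{eq:cor1} after passing to the complementary event. There is no genuine obstacle here; the only thing to be careful about is that the constant $\mathcal{C}$ in the corollary is allowed to differ from the one in Lemma \ref{randomprojectionlemma}, and that the bound is uniform in $x\neq 0$ because the right-hand side of \eqref{eqn:01} does not depend on $x$.
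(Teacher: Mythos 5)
Your proposal is correct and follows essentially the same route as the paper: both deduce the corollary from Lemma \ref{randomprojectionlemma} by noting that the event $(1-\varepsilon)\|x\|\le\|T(x)\|$ with $x\neq 0$ forces $T(x)\neq 0$, so the nonvanishing event contains the norm-preservation event. The only (immaterial) difference is bookkeeping of the constant: the paper lets $\varepsilon$ range over $(0,1)$ and passes to the limit to keep the lemma's constant $\mathcal{C}$, whereas you fix $\varepsilon=\tfrac12$ and absorb $\varepsilon^2$ into a new constant, which the statement of the corollary permits.
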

\begin{proof}
For any $\varepsilon\in (0,1)$ , we define the following events:
\begin{eqnarray*}
\mathcal{A} & =& \big\{T(x) \neq 0 \big \} \\
\mathcal{B} & =& \big \{(1-\varepsilon)\|x\| \le \|T(x)\| \le
 (1+\varepsilon)\|x\| \big\}.
\end{eqnarray*}
By Lemma \ref{randomprojectionlemma} it follows that $\mbox{\sf
  Prob}(\mathcal{B}) \ge 1 - 2e^{-c\varepsilon^2 k}$ for some constant
$\mathcal{C}>0$ independent of $m,k,\varepsilon$.  On the other hand,
$\mathcal{A}^{\mbox{\sf\scriptsize c}} \cap \mathcal{B} =\emptyset$,
since otherwise, there is a mapping $T_1$ such that $T_1(x) = 0$ and
$(1-\varepsilon)\|x\| \le \|T_1(x)\|,$ which altogether imply that $x
= 0$ (a contradiction). Therefore, $\mathcal{B}\subseteq\mathcal{A}$,
and we have $\mbox{\sf Prob}(\mathcal{A}) \ge \mbox{\sf
  Prob}(\mathcal{B}) \ge 1 - 2e^{-\mathcal{C}\varepsilon^2 k}$. This
holds for all $0 < \varepsilon < 1$, so $\mbox{\sf
  Prob}(\mathcal{A})\ge 1-2 e^{\mathcal{C}k}$.\qed
\end{proof}  
\begin{lem} \label{membershiplemma}
Let $T:\mathbb{R}^m \to \mathbb{R}^k$ be one of the random mappings as
in Lemma \ref{randomprojectionlemma} and
$b,a_1,\ldots,a_n\in\mathbb{R}^m$. Then for any given vector $x \in
\mathbb{R}^n$, we have:
\begin{enumerate}[(i)]
\item If $b = \sum\limits_{i=1}^n x_i a_i$ then
  $T(b)=\sum\limits_{i=1}^n x_i T(a_i)$; \label{lm1}
\item If $b \neq \sum_{i=1}^n x_i a_i$ then $\mbox{\sf Prob}\, \bigg[
  T(b) \neq \sum_{i=1}^n x_i T(a_i)\bigg] \ge 1 -
  2e^{-\mathcal{C}k}$; \label{lm2}
\item If $b \neq \sum_{i=1}^n y_i a_i$ for all $y \in X \subseteq
  \mathbb{R}^n$, where $|X|$ is finite, then
  \[\mbox{\sf Prob}\, \bigg[\forall y\in X\ 
    T(b) \neq \sum_{i=1}^n y_i T(a_i) \bigg] \ge 1 -
  2|X|e^{-\mathcal{C}k};\] \label{lm3}
\end{enumerate}
for some constant $\mathcal{C}>0$ (independent of $n,k$).
\end{lem}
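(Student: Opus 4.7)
My plan is to observe that all three parts reduce, via linearity of $T$, to the concentration statement in Corollary \ref{cor1}. The mapping $T$ is a rescaled linear map (either an orthogonal projection or a random matrix times $\sqrt{1/k}$), so in particular $T\bigl(\sum_i \alpha_i v_i\bigr) = \sum_i \alpha_i T(v_i)$ for any scalars $\alpha_i$ and vectors $v_i$. This is the structural fact I will exploit throughout.

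For part (\ref{lm1}) I would simply invoke linearity: if $b = \sum_{i=1}^n x_i a_i$, applying $T$ to both sides gives $T(b) = \sum_{i=1}^n x_i T(a_i)$ deterministically, with no probability involved. For part (\ref{lm2}) I introduce the auxiliary vector $v := b - \sum_{i=1}^n x_i a_i$, which is nonzero by hypothesis. By linearity, $T(v) = T(b) - \sum_{i=1}^n x_i T(a_i)$, so the event $\{T(b) \neq \sum_i x_i T(a_i)\}$ is exactly the event $\{T(v) \neq 0\}$. Corollary \ref{cor1} applied to $v$ then yields $\mbox{\sf Prob}(T(v) \neq 0) \ge 1 - 2e^{-\mathcal{C} k}$, which is the bound claimed.

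For part (\ref{lm3}) I would apply a standard union bound. For each fixed $y \in X$, part (\ref{lm2}) gives that the ``bad'' event $E_y := \{T(b) = \sum_i y_i T(a_i)\}$ has probability at most $2e^{-\mathcal{C} k}$. Since $|X|$ is finite,
\begin{equation*}
\mbox{\sf Prob}\Bigl(\bigcup_{y \in X} E_y\Bigr) \le \sum_{y \in X} \mbox{\sf Prob}(E_y) \le 2|X| e^{-\mathcal{C} k},
\end{equation*}
and passing to the complement yields the desired lower bound. The only real subtlety is to notice that the hypothesis ``$b \neq \sum_i y_i a_i$ for all $y \in X$'' guarantees that part (\ref{lm2}) is applicable for every such $y$, so each $E_y$ genuinely corresponds to a nonzero difference vector $b - \sum_i y_i a_i$. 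I do not anticipate any substantive obstacle: the corollary does all the probabilistic work, and the remaining content is linearity plus a union bound; the finiteness of $X$ is the only condition that matters for the argument to go through as stated.
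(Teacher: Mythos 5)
Your proposal is correct and follows essentially the same route as the paper's proof: linearity of $T$ for part (i), applying Corollary \ref{cor1} to the nonzero difference vector $b - \sum_i x_i a_i$ for part (ii), and a union bound over the finitely many $y \in X$ followed by passing to the complement for part (iii). No gaps; this matches the paper's argument step for step.
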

\begin{proof}
 Point \eqref{lm1} follows by linearity of $T$, and \eqref{lm2} by
  applying Cor.~\ref{cor1} to $Ax-b$. For
\eqref{lm3}, we have
\begin{eqnarray*}
\mbox{\sf Prob}\, \bigg[\forall y\in X\ T(b) \neq \sum_{i=1}^n y_i
  T(a_i) \bigg] & = & \mbox{\sf Prob}\, \bigg[ \bigcap_{y \in X} \; \big
  \{ T(b) \neq \sum_{i=1}^n y_i T(a_i)\big \}\bigg] \\ = 1 -
\mbox{\sf Prob}\, \bigg[ \bigcup_{y \in X} \; \big \{ T(b) \neq
  \sum_{i=1}^n y_i T(a_i)\big \}^{\mbox{\sf\scriptsize c}}\bigg] &
  \ge& 1 - \sum_{y \in X} \mbox{\sf Prob}\, \bigg[\big \{ T(b) \neq
    \sum_{i=1}^n y_i T(a_i)\big \}^{\mbox{\sf\scriptsize c}}\bigg] \\
  \mbox{\color{darkgrey} [by \eqref{lm2}]\hspace*{1cm}} &
    \ge& 1 - \sum_{y \in X} 2e^{-\mathcal{C}k} = 1 -
    2|X|e^{-\mathcal{C}k},
\end{eqnarray*}
as claimed.\qed
\end{proof}

This lemma can be used to solve the RLM problem when the cardinality
of the restricted set $X$ is bounded by a polynomial in $n$. In
particular, if $|X| < n^d$, where $d$ is small w.r.t.~$n$, then
\begin{equation}
  \mbox{\sf Prob} \big[T(b) \notin \mbox{\sf Lin}_X \, \{T(a_1),\ldots,T(a_n)\}
  \big] \ge 1 - 2n^de^{-\mathcal{C}k}.\label{eq:rlm}
\end{equation}
Then by taking any $k$ such that $k \ge
\frac{1}{\mathcal{C}}\ln(\frac{2}{\delta}) + \frac{d}{\mathcal{C}} \ln
n$, we obtain a probability of success of at least $1 -\delta$. We
give an example to illustrate that such a bound for $|X|$ is natural
in many different settings.
\begin{eg} If $X = \{x \in \{0,1\}^n| \,\sum_{i=1}^{n} \alpha_i x_i
  \le d\}$ for some $d$, where $0 < \alpha_i$ for all $1 \le i \le n$,
  then $|X| < n^{\overline{d}}$, where $\overline{d} =
  \max\limits_{1\le i \le n} \lfloor \frac{d}{\alpha_i} \rfloor$. To
  see this, let $\underline{\alpha} = \min\limits_{1\le i \le n}
  \alpha_i$; then $\sum\limits_{i=1}^n x_ i \le \sum\limits_{i=1}^n
  \frac{\alpha_i}{\underline{\alpha}} x_ i \le
  \frac{d}{\underline{\alpha}}$, which implies $\sum\limits_{i=1}^n x_
  i \le \overline{d}$. Therefore $|X| \le {n \choose 0} + {n \choose
    1} + \ldots {n \choose \overline{d}} < n^{\overline{d}}$, as
  claimed.  \qed
\end{eg}

Lemma \ref{membershiplemma} also gives us an indication as to why
estimating the probability that
\[T(b) \notin \mbox{\sf cone}\{T(a_1),\ldots,T(a_n)\}\]
is not straightforward. This event
can be written as an intersection of infinitely many sub-events
\[\{ T(b) \neq \sum_{i=1}^n y_i T(a_i)\}\]
where $y\in\mathbb{R}_+^n$; even
if each of these occurs with high probability, their intersection
might still be small. As these events are dependent, however, we still
hope to find to find a useful estimation for this probability.

\section{Projections of separating hyperplanes}
In this section we show that if a hyperplane separates a point $x$
from a closed and convex set $C$, then its image under a random
projection $T$ is also likely to separate $T(x)$ from $T(C)$. The
separating hyperplane theorem applied to cones can be stated as
follows.
\begin{thm}[Separating hyperplane theorem]
  \label{sephyp}
  Given $b \notin \mbox{\sf cone}\{a_1,\ldots,a_n\}$ where
  $b,a_1,\ldots,a_n \in \mathbb{R}^m$. Then there is $c \in
  \mathbb{R}^m$ such that $c^Tb < 0$ and $c^Ta_i \ge 0$ for all $i =
  1,\ldots,n$.
\end{thm}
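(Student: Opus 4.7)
The plan is to prove this via projection onto the closed convex cone $C=\mbox{\sf cone}\{a_1,\ldots,a_n\}$ and read off the separating vector from the residual. First, I would establish that $C$ is closed (since it is finitely generated; this is the non-trivial ingredient, and can be proved by induction on $n$, or invoked as the classical Minkowski--Weyl result). Together with the convexity of $C$ and the hypothesis $b \notin C$, closedness guarantees the existence and uniqueness of the Euclidean projection $p^\star = \arg\min_{y\in C}\|y-b\|$, with $c := p^\star - b \neq 0$.

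Next, I would use the variational characterization of the projection onto a closed convex set: for every $y \in C$,
\[
  (b - p^\star)^\top (y - p^\star) \le 0, \qquad \text{equivalently} \qquad c^\top(y - p^\star) \ge 0.
\]
I would then exploit the cone structure twice. Since $0 \in C$, plugging $y=0$ yields $c^\top p^\star \le 0$; since $t\,p^\star \in C$ for all $t \ge 0$, plugging $y = t\,p^\star$ and letting $t$ range over $(0,1)$ and $(1,\infty)$ forces $c^\top p^\star = 0$. Similarly, plugging $y = t\,a_i$ for arbitrary $t\ge 0$ gives $t\,c^\top a_i \ge c^\top p^\star = 0$ for all $t \ge 0$, which implies $c^\top a_i \ge 0$ for every $i$.

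Finally, I would compute
\[
  c^\top b = c^\top(p^\star - c) = c^\top p^\star - \|c\|^2 = -\|c\|^2 < 0,
\]
using $c \neq 0$. This $c$ meets both required conditions. The main obstacle is genuinely the closedness of the finitely generated cone $C$: without it, the projection $p^\star$ need not exist and the argument collapses. Everything else reduces to the standard projection inequality plus the homogeneity of $C$ under non-negative scaling.
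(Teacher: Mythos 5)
Your proof is correct, but note that the paper itself offers no proof of this theorem: it is quoted as a classical fact (it is essentially Farkas' lemma in its conic separation form), and the paper's own contribution only begins downstream of it, in the proposition about random projections of the separating vector $c$. So there is no in-paper argument to compare against; what you have written is the standard projection-based proof of this result, and it is sound. The variational inequality $(b-p^\star)^\top(y-p^\star)\le 0$ for all $y\in C$, the use of homogeneity (plugging $y=tp^\star$ for $t$ on both sides of $1$) to force $c^\top p^\star=0$, the deduction $c^\top a_i\ge 0$ (for which $y=a_i$ alone already suffices, by the way), and the final computation $c^\top b = c^\top p^\star - \|c\|^2 = -\|c\|^2 < 0$ are all correct. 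You are also right to flag closedness of the finitely generated cone $C$ as the one genuinely non-trivial ingredient: for cones that are not finitely generated the statement with strict inequality $c^\top b<0$ can fail precisely because $b$ may lie in the closure of $C$, and then $p^\star$ gives no separation; a Carath\'eodory-type induction (writing $C$ as a finite union of cones over linearly independent subsets of $\{a_1,\ldots,a_n\}$, each of which is closed) or a citation of Minkowski--Weyl disposes of it, exactly as you indicate.
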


For simplicity, we will first work with \emph{pointed cone}. Recall
that a cone $C$ is called pointed if and only if $C \cap -C =
\{0\}$. The associated separating hyperplane theorem is obtained by
replacing all $\ge$ inequalities by strict ones.
Without loss of generality, we can assume that $\|c\|=1$. From this
theorem, it immediately follows that there is a positive
$\varepsilon_0$ such that $c^Tb < -\varepsilon_0$ and $c^Ta_i >
\varepsilon_0$ for all $1 \le i \le n$.

\begin{prop} Given $b, a_1,\ldots,a_n \in \mathbb{R}^{m}$ of norms $1$
  such that $b \notin \mbox{\sf cone} \{a_1,\ldots,a_n\}$, 
  $\varepsilon > 0$, and $c\in\mathbb{R}^m$ with $\|c\|=1$ be such
  that $c^Tb < -\varepsilon$ and $c^Ta_i \ge \varepsilon$ for all
  $1 \le i \le n$. Let $T:\mathbb{R}^m \to \mathbb{R}^k$ be one of the
  random mappings as in Lemma \ref{randomprojectionlemma2}, then
  \[  \mbox{\sf Prob} \big [T(b) \notin \mbox{\sf cone}
    \{T(a_1),\ldots,T(a_n)\} \big] \ge 1 -
  4(n+1)e^{-\mathcal{C}(\varepsilon^2 - \varepsilon^3)k}\] for some
  constant $\mathcal{C}$ (independent of $m,n,k,\varepsilon$).
\end{prop}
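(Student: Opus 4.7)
The plan is to exploit the given separating hyperplane $c$ and show that its image $T(c)$ plays the analogous separating role in the projected space with high probability. The key technical fact is that the considered random projections approximately preserve inner products, not merely norms, and this follows from the squared random projection lemma via polarization.

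First I would use the polarization identity $\langle u,v\rangle = \tfrac{1}{4}(\|u+v\|^2 - \|u-v\|^2)$ and apply Lemma \ref{randomprojectionlemma2} to each of the $2(n+1)$ vectors $c\pm b$ and $c\pm a_i$ for $i=1,\ldots,n$. Each event of the form $(1-\varepsilon)\|u\|^2 \le \|T(u)\|^2 \le (1+\varepsilon)\|u\|^2$ fails with probability at most $2e^{-\mathcal{C}(\varepsilon^2-\varepsilon^3)k}$, so a union bound shows that all $2(n+1)$ of them hold simultaneously with probability at least $1 - 4(n+1)e^{-\mathcal{C}(\varepsilon^2-\varepsilon^3)k}$, matching the bound in the proposition.

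On this good event, combining the upper bound on $\|T(c+v)\|^2$ with the lower bound on $\|T(c-v)\|^2$ via polarization, and using the parallelogram law $\|c+v\|^2+\|c-v\|^2 = 2(\|c\|^2+\|v\|^2)$, yields $|\langle T(c),T(v)\rangle - \langle c,v\rangle| \le \tfrac{\varepsilon}{2}(\|c\|^2+\|v\|^2) = \varepsilon$ for any unit vector $v$, since $\|c\|=1$. Specialising to $v=b$ and $v=a_i$ gives $\langle T(c),T(b)\rangle \le c^Tb + \varepsilon < 0$ and $\langle T(c),T(a_i)\rangle \ge c^Ta_i - \varepsilon \ge 0$ for every $i$. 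Hence $T(c)$ separates $T(b)$ from $\mbox{\sf cone}\{T(a_1),\ldots,T(a_n)\}$: if $T(b) = \sum_i \lambda_i T(a_i)$ with $\lambda_i\ge 0$, then $\langle T(c),T(b)\rangle = \sum_i \lambda_i \langle T(c),T(a_i)\rangle \ge 0$, a contradiction.

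The main obstacle is the tight calibration of constants: the JL-type distortion on inner products of unit vectors is exactly $\varepsilon$, and this is precisely the slack built into the hypotheses $c^Tb < -\varepsilon$ and $c^Ta_i \ge \varepsilon$. If either the distortion were larger or the hypothesised gap smaller, the separation would not survive projection. The numerical factor $4(n+1)$ unfolds naturally as $n+1$ pairs $(c,b),(c,a_1),\ldots,(c,a_n)$, times $2$ for the two polarization vectors $c\pm v$ associated to each pair, times the factor $2$ that appears in the two-sided failure probability of Lemma \ref{randomprojectionlemma2}.
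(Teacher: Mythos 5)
Your proposal is correct and follows essentially the same route as the paper's own proof: apply the squared random projection lemma to the $2(n+1)$ vectors $c\pm b$, $c\pm a_i$, take a union bound to get the $1-4(n+1)e^{-\mathcal{C}(\varepsilon^2-\varepsilon^3)k}$ probability, then use polarization (with $\|c\|=\|b\|=\|a_i\|=1$) to show $\langle T(c),T(b)\rangle < 0$ and $\langle T(c),T(a_i)\rangle \ge 0$, so that $T(c)$ still separates. The only cosmetic difference is that you prove the final separation step directly (which is the trivial converse of Theorem~\ref{sephyp}), whereas the paper cites the theorem itself.
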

\begin{proof}
Let $A$ be the event that both $(1-\varepsilon)\|c-x\|^2 \le
\|T(c-x)\|^2 \le (1+\varepsilon)\|c-x\|^2$ and
$(1-\varepsilon)\|c+x\|^2 \le \|T(c+x)\|^2 \le
(1+\varepsilon)\|c+x\|^2$ hold for all $x \in
\{b,a_1,\ldots,a_n\}$. By Lemma \ref{randomprojectionlemma2}, we have
$\mbox{\sf Prob} (A) \ge 1 - 4(n+1)e^{-c(\varepsilon^2-\varepsilon^3)
  k}$.  For any random mapping $T$ such that $A$ occurs, we have
\begin{eqnarray*}
\langle T(c), T(b) \rangle & =& \frac{1}{4} (\|T(c+b)\|^2 - \|T(c-b\|^2) \\
& \le& \frac{1}{4} (\|c+b\|^2 - \|c-b\|^2) + \frac{\varepsilon}{4} (\|c+b\|^2 + \|c-b\|^2) \\
& =& c^Tb + \varepsilon < 0
\end{eqnarray*}
and, for all $i=1,\ldots,n$, we can similarly derive $c^Ta_i -
\varepsilon \ge 0$ from $\langle T(c), T(a_i) \rangle$. Therefore, by
Thm.~\ref{sephyp}, $T(b) \notin \mbox{\sf cone} \{T(a_1),\ldots,
T(a_n)\}$. \qed
\end{proof}

From this proposition, it follows that the larger $\varepsilon$ will
provide us a better probability. The largest $\varepsilon$ can be
found by solving the following optimization problem.
\begin{quote}
  {\sc Separating Coefficient Problem (SCP)}.\\ Given $b \notin
  \mbox{\sf cone}\,\{a_1,\ldots,a_n\}$, find $\varepsilon = \,
  \max\limits_{c, \varepsilon} \; \{\varepsilon |\, \varepsilon \ge 0,
  c^Tb \le -\varepsilon, c^Ta_i \ge \varepsilon \}$.
\end{quote}

Note that $\varepsilon$ can be extremely small when the cone $C$
generated by $a_1,\ldots,a_n$ is almost non-pointed, i.e. the convex
hull of $a_1,\ldots,a_n$ contains a point close to $0$. Indeed, for
any convex combination $x = \sum_i \lambda_i a_i$ with $\sum_i
\lambda_i = 1$ of $a_i$'s, we have:
$$\|x\| = \|x\|\,\|c\| \ge c^Tx = \sum_{i=1}^n \lambda_i c^Ta_i \ge \sum_{i=1}^n \lambda_i \varepsilon = \varepsilon.$$
Therefore, $\varepsilon \le \min \{ \|x\|\;|\; x \in \mbox{\sf conv} \{a_1,\ldots,a_n\} \}.$

\section{Projection of minimum distance}
In this section we show that if the distance between a point $x$ and a
closed set is positive, it remains positive with high probability
after applying a random projection. First, we consider the following
problem.
\begin{quote} {\sc Convex Hull Membership (CHM)}.\\
Given $b,a_1,\ldots,a_n \in \mathbb{R}^{m}$, decide whether
$b\in\mbox{\sf conv} \{a_1,\ldots,a_n\}$.
\end{quote}

\begin{prop} 
Given $a_1,\ldots,a_n \in \mathbb{R}^{m}$, let $C=\mbox{\sf conv}
\{a_1,\ldots,a_n\}$, $b \in \mathbb{R}^m$ such that $b \notin C$, $d =
\min\limits_{x \in C} \|b-x\|$ and $D = \max\limits_{1\le i \le n} \|b
-a_i\|$. Let $T:\mathbb{R}^m \to \mathbb{R}^k$ be a random mapping as
in Lemma \ref{randomprojectionlemma}. Then
\begin{equation}
  \mbox{\sf Prob} \big [T(b) \notin T(C) \big] \ge 1 - 2n^2 e^{-\mathcal{C}(\varepsilon^2-\varepsilon^3)k}
  \label{eqproj}
\end{equation}
for some constant $\mathcal{C}$ (independent of $m,n,k,d,D$) and
$\varepsilon < \frac{d^2}{D^2}$.
\end{prop}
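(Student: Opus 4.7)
The plan is to reduce the problem to a uniform lower bound on $\|\sum_i \lambda_i T(v_i)\|^2$ over the simplex $\Delta_n$, where I set $v_i = b - a_i$. Since $T$ is linear, $T(C) = \mbox{\sf conv}\{T(a_1),\ldots,T(a_n)\}$, so $T(b)\in T(C)$ is equivalent to the existence of $\lambda\in\Delta_n$ with $\sum_i \lambda_i T(v_i) = 0$. Thus it suffices to show $\min_{\lambda\in\Delta_n}\|\sum_i \lambda_i T(v_i)\|^2 > 0$ with the claimed probability.

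The main obstacle is that $\Delta_n$ is infinite, so Lemma \ref{membershiplemma} cannot be applied directly. I would circumvent this by showing that random projection approximately preserves all inner products $\langle v_i,v_j\rangle$, which is a finite set of conditions. Expanding $\langle T(v_i),T(v_j)\rangle = \tfrac{1}{4}(\|T(v_i+v_j)\|^2 - \|T(v_i-v_j)\|^2)$ and applying Lemma \ref{randomprojectionlemma2} to each of the $n^2$ vectors $\{v_i+v_j : i\le j\}\cup\{v_i-v_j : i<j\}\cup\{v_i\}$, the parallelogram identity $\|v_i+v_j\|^2+\|v_i-v_j\|^2 = 2(\|v_i\|^2+\|v_j\|^2)$ yields, on the corresponding good event,
\[ |\langle T(v_i),T(v_j)\rangle - \langle v_i,v_j\rangle| \le \frac{\varepsilon}{2}(\|v_i\|^2+\|v_j\|^2) \le \varepsilon D^2 \]
for every $i,j$, using the hypothesis $\|v_i\|\le D$. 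A union bound over the $n^2$ squared-JLL events gives a failure probability of at most $2n^2 e^{-\mathcal{C}(\varepsilon^2-\varepsilon^3)k}$, matching the probability asserted in the statement.

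Once inner products are controlled, the uniform bound over $\Delta_n$ is immediate: for any $\lambda\in\Delta_n$,
\[ \|\sum_i \lambda_i T(v_i)\|^2 = \sum_{i,j}\lambda_i\lambda_j \langle T(v_i),T(v_j)\rangle \ge \sum_{i,j}\lambda_i\lambda_j \langle v_i,v_j\rangle - \varepsilon D^2\,(\textstyle\sum_i \lambda_i)^2 = \|\sum_i \lambda_i v_i\|^2 - \varepsilon D^2. \]
Since $\sum_i \lambda_i v_i = b - \sum_i \lambda_i a_i$ and $\sum_i \lambda_i a_i\in C$, the distance hypothesis gives $\|\sum_i \lambda_i v_i\|^2 \ge d^2$. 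Under the assumption $\varepsilon < d^2/D^2$, we conclude $\|\sum_i \lambda_i T(v_i)\|^2 \ge d^2-\varepsilon D^2 > 0$ for every $\lambda\in\Delta_n$, so $T(b)\notin T(C)$ on the good event. The conceptual core of the argument is therefore the polarization-plus-parallelogram trick, which converts a continuum of convex combinations into finitely many norm-preservation conditions while absorbing the diagonal contributions into the uniform bound $D$.
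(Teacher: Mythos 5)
Your proposal is correct and takes essentially the same route as the paper's own proof: the paper likewise reduces the problem to showing $\|T(b)-\sum_i\lambda_i T(a_i)\|^2=\|\sum_i\lambda_i T(b-a_i)\|^2\ge d^2-\varepsilon D^2>0$ uniformly over the simplex, by applying Lemma \ref{randomprojectionlemma2} to the same $n+2\binom{n}{2}=n^2$ vectors $b-a_i$ and $(b-a_i)\pm(b-a_j)$, using polarization for the cross terms and the parallelogram identity to absorb the error into $D^2(\sum_i\lambda_i)^2=D^2$, followed by the same union bound. Your only difference is cosmetic: you package the polarization/parallelogram step as an explicit uniform inner-product preservation bound $|\langle T(v_i),T(v_j)\rangle-\langle v_i,v_j\rangle|\le\varepsilon D^2$ before summing the quadratic form, whereas the paper carries the error terms inline.
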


\begin{proof}
Let $S_\varepsilon$ be the event that both $(1-\varepsilon)\|x-y\|^2
\le \|T(x-y)\|^2 \le (1+\varepsilon)\|x-y\|^2$ and
$(1-\varepsilon)\|x+y\|^2 \le \|T(x+y)\|^2 \le
(1+\varepsilon)\|x+y\|^2$ hold for all $x,y \in \{0, b-a_1,\ldots,b
-a_n\}$. Assume $S_\varepsilon$ occurs. Then for all real $\lambda_i
\ge 0$ with $\sum\limits_{i=1}^n \lambda_i = 1$, we have:
{\small
\begin{eqnarray*}
&& \|T(b)- \sum_{i=1}^n \lambda_i T(a_i) \|^2 
 =  \|\sum_{i=1}^n \lambda_i T(b - a_i) \|^2 \\ [-0.2em]
& = & \sum_{i=1}^n \lambda^2_i \|T(b - a_i) \|^2 + 2 \sum_{1\le i<j\le n} \lambda_i \lambda_j \langle T(b - a_i), T(b-a_j) \rangle \\ [-0.2em]
 &=& \sum_{i=1}^n \lambda^2_i \|T(b - a_i) \|^2 + \frac{1}{2}\sum_{1\le i<j\le n} \lambda_i \lambda_j\bigg ( \|T(b - a_i + b - a_j)\|^2 - \|T(a_i-a_j)\|^2 \bigg) \\ [-0.2em]
 &\ge & (1 - \varepsilon) \sum_{i=1}^n \lambda^2_i \|b - a_i\|^2 +  \frac{1}{2}\sum_{1\le i<j\le n} \lambda_i \lambda_j \bigg ( (1 - \varepsilon) \big \|b - a_i + b - a_j \big \|^2  - (1 + \varepsilon) \|a_i - a_j\|^2 \bigg)\\ [-0.2em]
 &= & \|b- \sum_{i=1}^n \lambda_i a_i\|^2 
- \varepsilon \bigg(\sum_{i=1}^n \lambda^2_i \|b - a_i\|^2 + \frac{1}{2}\sum_{1\le i<j\le n} \lambda_i \lambda_j ( \|b - a_i + b-a_j\|^2 + \|a_i - a_j\|^2 )\bigg) \\ [-0.2em]
 &=& \|b- \sum_{i=1}^n \lambda_i a_i\|^2 
- \varepsilon \bigg(\sum_{i=1}^n \lambda^2_i \|b - a_i\|^2 + \sum_{1\le i<j\le n} \lambda_i \lambda_j ( \|b - a_i\|^2 + \|b - a_j\|^2 )\bigg).
\end{eqnarray*}
}%
From the definitions of $d$ and $D$, we have:
{\small
\begin{eqnarray*}
  \|T(b)- \sum_{i=1}^n
\lambda_i T(a_i) \|^2 \ge d^2 - \varepsilon D^2 \bigg(\sum_{i=1}^n
  \lambda^2_i + 2 \sum_{1\le i<j\le n} \lambda_i \lambda_j \bigg) 
= d^2 - \varepsilon D^2 \bigg(\sum_{i=1}^n \lambda_i\bigg)^2 = 
\end{eqnarray*}
} $=d^2 -\varepsilon D^2 > 0$ due to the choice of $\varepsilon <
\frac{d^2}{D^2}$. Now, since $ \|T(b)- \sum\limits_{i=1}^n \lambda_i
T(a_i) \|^2>0$ for all choices of $\lambda$, it follows that $T(b)
\notin\mbox{\sf conv}\{T(a_1), \ldots, T(a_n)\}$. In summary, if
$S_\varepsilon$ occurs, then $T(b) \notin \mbox{\sf conv}
\{T(a_1),\ldots, T(a_n)\}$.  Thus, by Lemma
\ref{randomprojectionlemma2} and the union bound,
{\small
\begin{equation*}
\mbox{\sf Prob} (T(b) \notin T(C)) \ge \mbox{\sf Prob} (S_\varepsilon)
\ge 1 - 2\big(n + 2{\scriptsize {n \choose 2}}\big)
e^{-\mathcal{C}(\varepsilon^2-\varepsilon^3) k} = 1 -2
n^2e^{-\mathcal{C}(\varepsilon^2-\varepsilon^3) k}
\end{equation*}
}
for some constant $\mathcal{C} > 0$. \qed
\end{proof}

In order to deal with the CM problem, we consider the $A$-norm of
$x\in \mbox{\sf cone}\{a_1,\ldots,a_n\}$ as $\|x\|_A = \min
\big\{\sum\limits_{i=1}^{n} \lambda_i \;\big|\;\lambda \ge 0\land x =
\sum\limits_{i=1}^{n} \lambda_i a_i \big\}$. For each $x \in \mbox{\sf
  cone}\{a_1,\ldots,a_n\}$, we say that $\lambda\in\mathbb{R}^n_+$
yields a \emph{minimal $A$-representation} of $x$ if and only if
$\sum\limits_{i=1}^{n} \lambda_i = \|x\|_A$. We define $\mu_A = \max
\{\|x\|_A \;|\; x \in \mbox{\sf cone}\{a_1,\ldots,a_n\} \land \|x\|
\le 1\}$; then, for all $x \in \mbox{\sf cone}\{a_1,\ldots,a_n\}$,
$\|x\| \le \|x\|_A \le \mu_A \|x\|$. In particular $\mu_A \ge 1$. Note
that $\mu_A$ serves as a measure of worst-case distortion when we move
from Euclidean to $\|\cdot\|_A$ norm.

\begin{thm} 
Given $b, a_1,\ldots,a_n \in \mathbb{R}^{m}$ of norms $1$ such that
$b\notin C= \mbox{\sf cone} \{a_1,\ldots,a_n\}$, let $d =
\min\limits_{x \in C} \|b-x\|$ and $T:\mathbb{R}^m \to \mathbb{R}^k$
be one of the random mappings in Lemma
\ref{randomprojectionlemma2}. Then:
\begin{equation}
  \mbox{\sf Prob}(\,T(b) \notin \mbox{\sf cone} \{T(a_1),\ldots,T(a_n)\}\,) \ge 1 - 2n(n+1)e^{-\mathcal{C}(\varepsilon^2-\varepsilon^3)k}
  \label{eqnormA}
\end{equation}
for some constant $\mathcal{C}$ (independent of $m,n,k,d$), in which 
$\varepsilon = \frac{d^2}{ \mu_A^2 + 2 \|p\| \mu_A + 1}$.
\label{mainthm}
\end{thm}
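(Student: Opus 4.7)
The plan is to show that under a high-probability event on $T$, one has $T(b) \neq T(x)$ for every $x \in C := \mbox{\sf cone}\{a_1,\ldots,a_n\}$. By linearity of $T$, this is exactly the desired conclusion, since any supposed conic combination $\sum_i \lambda_i T(a_i)$ equals $T(\sum_i \lambda_i a_i) = T(x)$ for some $x \in C$. So the task reduces to a uniform strict lower bound on $\|T(b-x)\|$ over $x \in C$.

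I would first exploit the projection $p$ of $b$ onto $C$ (well-defined since $C$ is closed and convex). Because $0 \in C$ and $2p \in C$, the optimality inequality $\langle b - p,\,y - p\rangle \le 0$ for all $y \in C$ forces $\langle b, p\rangle = \|p\|^2$, hence the identity $d^2 = 1 - \|p\|^2$ (in particular $\|p\| \le 1$). The same inequality supplies the convex-Pythagoras bound $\|b-x\|^2 \ge d^2 + \|x-p\|^2$ for every $x \in C$; this is the geometric ingredient that converts the scalar $d$ into a $C$-uniform lower bound on $\|b-x\|^2$.

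Next let $S_\varepsilon$ be the event that the squared estimate of Lemma \ref{randomprojectionlemma2} holds simultaneously for every $T(u \pm v)$ with $u,v$ drawn from $\{b,a_1,\ldots,a_n\}$; by the union bound, $\mbox{\sf Prob}(S_\varepsilon) \ge 1 - 2n(n+1)\,e^{-\mathcal{C}(\varepsilon^2-\varepsilon^3)k}$. Under $S_\varepsilon$, fix any $x \in C$ and pick a \emph{minimal} $A$-representation $x = \sum_i \lambda_i a_i$, so $\sum_i \lambda_i = \|x\|_A \le \mu_A\|x\|$. I expand $\|T(b-x)\|^2$ bilinearly and estimate each inner product through the polarization identity $\langle T(u),T(v)\rangle = \tfrac14(\|T(u+v)\|^2 - \|T(u-v)\|^2)$ together with the bounds from $S_\varepsilon$. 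Using $\|b\| = \|a_i\| = 1$, all $\varepsilon$-error terms should telescope into a single square, yielding
\[
  \|T(b-x)\|^2 \;\ge\; \|b-x\|^2 - \varepsilon\bigl(1 + \|x\|_A\bigr)^2 \;\ge\; \|b-x\|^2 - \varepsilon\bigl(1 + \mu_A\|x\|\bigr)^2.
\]

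Finally, setting $r = \|x-p\|$ and $\alpha = 1 + \mu_A\|p\|$, the convex-Pythagoras bound together with $\|x\| \le \|p\| + r$ gives
\[
  \|T(b-x)\|^2 \;\ge\; (1 - \varepsilon\mu_A^2)\,r^2 - 2\varepsilon\alpha\mu_A\,r + (d^2 - \varepsilon\alpha^2).
\]
An elementary minimization in $r \ge 0$ shows this quadratic is strictly positive exactly when $\varepsilon < d^2/(d^2\mu_A^2 + \alpha^2)$; and substituting the identity $\|p\|^2 = 1 - d^2$ collapses $d^2\mu_A^2 + \alpha^2$ to $\mu_A^2 + 2\mu_A\|p\| + 1$, which is precisely the value of $\varepsilon$ in the statement. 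I expect the main technical hurdle to be the polarization expansion: one must chase signs carefully so that the $\varepsilon$-errors bundle cleanly into the square $\varepsilon(1 + \|x\|_A)^2$; once that is in hand, the minimum-representation bound $\|x\|_A \le \mu_A\|x\|$ together with the collapse of the denominator via $d^2 + \|p\|^2 = 1$ delivers the sharp threshold on $\varepsilon$.
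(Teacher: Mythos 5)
Your proposal is correct and reaches the stated threshold exactly; its probabilistic skeleton is the paper's, but its geometric core is genuinely different (and cleaner). Like the paper, you use the event $S_\varepsilon$ over all sums and differences of pairs from $\{b,a_1,\ldots,a_n\}$, the union bound $1-2n(n+1)e^{-\mathcal{C}(\varepsilon^2-\varepsilon^3)k}$, and the polarization expansion $\|T(b-x)\|^2\ge\|b-x\|^2-\varepsilon(1+\|x\|_A)^2$ (the paper carries this computation out in full; your outline, with the parallelogram collapse of the error terms using $\|b\|=\|a_i\|=1$, is exactly how it goes). The difference lies in the uniform lower bound on $\|b-x\|^2$. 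The paper proves the Claim $\|b-x\|^2\ge\|x\|^2-2\|x\|\,\|p\|+1$ by rescaling $x$ onto the sphere of radius $\|p\|$ and splitting into the cases $x=0$, $p\ne 0$, and $p=0$ (the last requiring a limiting argument to show $b^Tx\le 0$), and then analyzes a quadratic in $\|x\|$. You instead invoke the obtuse-angle inequality $\langle b-p,\,x-p\rangle\le 0$ for projection onto a closed convex set, which at once yields $\langle b,p\rangle=\|p\|^2$ (hence $d^2=1-\|p\|^2$, which the paper derives only in the case $p\ne 0$ via Pythagoras) and the bound $\|b-x\|^2\ge d^2+\|x-p\|^2$, with no case analysis; this is in fact stronger than the paper's Claim, since $\|x-p\|\ge\bigl|\,\|x\|-\|p\|\,\bigr|$ recovers it. Your quadratic is then in $r=\|x-p\|$ rather than in $\|x\|$, and your discriminant condition collapses to the same denominator $\mu_A^2+2\|p\|\mu_A+1$ through $d^2+\|p\|^2=1$, so the two routes certify the same $\varepsilon$. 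One wrinkle you share with the paper: strict positivity of the quadratic is obtained only for $\varepsilon$ strictly below the threshold, whereas the statement takes $\varepsilon$ equal to it (the paper glosses over this with an unjustified strict inequality when invoking its Claim). In your setup the repair is immediate: the target event contains $S_\varepsilon$ for every $\varepsilon$ below the threshold, and the probability bound is continuous in $\varepsilon$, so letting $\varepsilon$ increase to the stated value gives the claimed estimate.
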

\begin{proof}
For any $0 < \varepsilon < 1$, let $S_\varepsilon$ be the event that
both $(1-\varepsilon)\|x-y\|^2 \le \|T(x-y)\|^2 \le
(1+\varepsilon)\|x-y\|^2$ and $(1-\varepsilon)\|x+y\|^2 \le
\|T(x+y)\|^2 \le (1+\varepsilon)\|x+y\|^2$ hold for all $x,y \in
\{b,a_1,\ldots,a_n\}$. By Lemma \ref{randomprojectionlemma2}, we have
$$\mbox{\sf Prob} (S_\varepsilon) \ge 1 - 4{n+1 \choose
  2}e^{-\mathcal{C}(\varepsilon^2-\varepsilon^3) k} = 1 -
2n(n+1)e^{-\mathcal{C}(\varepsilon^2-\varepsilon^3) k}$$ for some
constant $\mathcal{C}$ (independent of $m,n,k,d$). We will prove that
if $S_\varepsilon$ occurs, then we have $T(b) \notin \mbox{\sf cone}
\{T(a_1),\ldots, T(a_n)\}$. Assume that $S_\varepsilon$
occurs. Consider an arbitrary $x \in \mbox{\sf
  cone}\{a_1,\ldots,a_n\}$ and let $\sum\limits_{i=1}^{n} \lambda_i
a_i$ be the minimal $A$-representation of $x$. Then we have:
{\small
\begin{eqnarray*}
&&\|T(b) - T(x)\|^2 =  \|T(b) - \sum_{i=1}^n \lambda_i T(a_i) \|^2 \\ [-0.2em]
&= & \|T(b)\|^2 + \sum_{i=1}^n \lambda_i^2 \|T(a_i)\|^2 - 2\sum_{i=1}^n \lambda_i \langle T(b) ,T(a_i) \rangle +   2\sum_{1 \le i < j \le n} \lambda_i\lambda_j \langle T(a_i) ,T(a_j) \rangle \\ [-0.2em]
&=&\!\!\|T(b)\|^2\!\!+\!\!\sum_{i=1}^n\!\lambda_i^2 \|T(a_i)\|^2\!\!+\!\!\sum_{i=1}^n \frac{\lambda_i}{2} (\|T(b\!-\!a_i)\|^2\!\!-\!\|T(b\!+\!a_i)\|^2 )\!+\!\!\!\!\!\!\sum_{1 \le i < j \le n}\!\!\!\!\!\!\frac{\lambda_i\lambda_j}{2}(\|T(a_i\!+\!a_j)\|^2\!\!-\!\|T(a_i\!-\!a_j)\|^2) \\ [-0.2em]
&\ge & (1 - \varepsilon) \|b\|^2 + (1 - \varepsilon) \sum_{i=1}^n \lambda_i^2 \|a_i\|^2 + \sum_{i=1}^n \frac{\lambda_i}{2} ((1 - \varepsilon)\|b - a_i\|^2 - (1 + \varepsilon) \|b+a_i\|^2 ) \\ && \hspace*{4cm} + \sum_{1 \le i < j \le n}\frac{\lambda_i\lambda_j}{2} ((1 - \varepsilon) \|a_i + a_j\|^2 - (1 + \varepsilon) \|a_i-a_j\|^2 ),
\end{eqnarray*}
}%
because of the assumption that $S_\varepsilon$ occurs. Since
$\|b\| = \|a_1\| = \ldots \|a_n\| = 1$, the RHS can be written as
\begin{eqnarray*}
&&  \|b - \sum_{i=1}^n \lambda_i a_i \|^2 
 - \varepsilon \bigg (1 + \sum_{i=1}^n \lambda_i^2 + 2 \sum_{i=1}^n \lambda_i + 2 \sum_{j \neq i} \lambda_i \lambda_j \bigg) \\
&= & \|b - \sum_{i=1}^n \lambda_i a_i \|^2 
- \varepsilon \big (1 + \sum_{i=1}^n \lambda_i \big)^2  \\
&= & \|b - x\|^2 
- \varepsilon \big (1 + \|x\|_A \big)^2  
\end{eqnarray*}
Denote by $\alpha=\|x\|$ and let $p$ be the projection of $b$ to
$\mbox{\sf cone}\{a_1,\ldots,a_n\}$, which implies $\|b-p\|=\min
\{\|b-x\|\;|\;x \in \mbox{\sf cone}\{a_1,\ldots,a_n\}\}$.
\begin{quote}
  {\bf Claim}. For all $b,x,\alpha,p$ given above, we have
  $\|b-x\|^2\ge\alpha^2-2 \alpha\|p\|+1$.
\end{quote}
By this claim (proved later), we have:
\begin{eqnarray*}
 \|T(b) - T(x) \|^2  & >& \alpha^2 - 2\alpha\|p\| + 1 - \varepsilon
 \big (1 + \|x\|_A \big)^2 \\ 
 \ge \alpha^2 - 2\alpha\|p\| + 1 - \varepsilon \big (1 + \mu_A
 \alpha\big)^2 & = & \big(1 - \varepsilon \mu_A^2\big) \alpha^2 - 2 \big( \|p\| +
 \varepsilon \mu_A\big)\alpha + (1-\varepsilon). 
 \end{eqnarray*}
The last expression can be viewed as a quadratic function with respect
to $\alpha$. We will prove this function is nonnegative for all
$\alpha\in\mathbb{R}$. This is equivalent to
\begin{eqnarray*} 
&&\big(\|p\| + \varepsilon \mu_A\big)^2 -  \big(1 - \varepsilon \mu_A^2 \big) (1-\varepsilon) \le 0 \\
&\Leftrightarrow&  \big(\mu_A^2 +  2 \|p\|\mu_A + 1\big)\varepsilon  \le 1 - \|p\|^2  \\
& \Leftrightarrow & \varepsilon  \le  \frac{1 - \|p\|^2}{\mu_A^2  + 2 \|p\|\mu_A + 1} =  \frac{d^2}{\mu_A^2  + 2 \|p\|\mu_A + 1},
\end{eqnarray*}
which holds for the choice of $\varepsilon$ as in the hypothesis.  In
summary, if the event $S_\varepsilon$ occurs, then $\|T(b) - T(x)\|^2
> 0$ for all $x \in \mbox{\sf cone}\{a_1,\ldots,a_n\}$, i.e.  $T(x)
\notin \mbox{\sf cone}\{T(a_1),\ldots,T(a_n)\}$. Thus,
\[\mbox{\sf Prob}(T(b) \notin TC) \ge \mbox{\sf Prob} (S_\varepsilon) \ge 1 - 2n(n+1)e^{-c(\varepsilon^2 - \varepsilon^3)k}\]
as claimed. \qed
\par\bigskip\par
{
\noindent {\it Proof of the claim}. If $x = 0$ then the claim is
trivially true, since $\|b-x\|^2= \|b
\|^2=1=\alpha^2-2\alpha\|p\|+1$. Hence we assume $x\neq 0$.  First
consider the case $p\neq 0$. By Pythagoras' theorem, we must have $d^2
= 1 - \|p\|^2$.  We denote by $z=\frac{\|p\|}{\alpha}x$, then $\|z\| =
\|p\|$. Set $\delta = \frac{\alpha}{\|p\|}$, we have
\begin{eqnarray*}
\|b - x \|^2  & =& \|b - \delta z \|^2 \\
& =& (1 - \delta) \|b\|^2 + (\delta^2 - \delta) \|z\|^2 + \delta \|b - z\|^2 \\
& =& (1 - \delta) + (\delta^2 - \delta) \|p\|^2 + \delta \|b - z\|^2\\
& \ge& (1 - \delta) + (\delta^2 - \delta) \|p\|^2 + \delta d^2\\
& =& (1 - \delta) + (\delta^2 - \delta) \|p\|^2 + \delta (1 - \|p\|^2) \\
& =& \delta^2 \|p\|^2 - 2\delta \|p\|^2 + 1  = \alpha^2 - 2\alpha\|p\| + 1.
\end{eqnarray*}
Next, we consider the case $p = 0$. In this case we have $b^T(x) \le 0$
for all $x \in \mbox{\sf cone}\{a_1,\ldots,a_n\}$. Indeed, for an arbitrary $\delta >0$,
\[0 \le \frac{1}{\delta} (\|b-\delta x\|^2 - 1) = \frac{1}{\delta} (1 + \delta^2 \|x\|^2 - 2 \delta b^Tx - 1) = \delta \|x\|^2 - 2 b^Tx\]
which tends to $-2 b^Tx$ when $\delta \to 0^+$. Therefore
\begin{eqnarray*}
\|b - x \|^2 & =& 1 - 2b^Tx + \|x\|^2 \ge \|x \|^2 + 1 = \alpha^2 -
2\alpha\|p\| + 1,
\end{eqnarray*}
which proves the claim.} \qed
\end{proof}

\section{Computational results}
Let $T$ be the random projector, $A$ the constraint matrix, $b$ the
RHS vector, and $X$ either $\mathbb{R}^n_+$ in the case of LP and
$\mathbb{Z}^n_+$ in the case of IP. We solve $Ax=b\land x\in X$ and
$T(A)x=T(b)\land x\in X$ to compare accuracy and performance. In these
results, $A$ is dense. We generate $(A,b)$ componentwise from three
distributions: uniform on $[0,1]$, exponential, gamma. For $T$, we
only test the best-known type of projector matrix $T(y)=Py$, namely
$P$ is a random $k\times m$ matrix each component of which is
independently drawn from a normal $\mathcal{N}(0,\frac{1}{\sqrt{k}})$
distribution. All problems were solved using CPLEX 12.6 on an Intel i7
2.70GHz CPU with 16.0 GB RAM. All the computational experiments were
carried out in JuMP \cite{jump}.

Because accuracy is guaranteed for feasible instances by Lemma
\ref{membershiplemma} (i), we only test {\it infeasible} LP and IP
feasibility instances. For every given size $m\times n$ of the
constraint matrix, we generate 10 different instances, each of which
is projected using 100 randomly generated projectors $P$. For each
size, we compute the percentage of success, defined as an infeasible
original problem being reduced to an infeasible projected
problem. Performance is evaluated by recording the average user CPU
time taken by CPLEX to solve the original problem, and, comparatively,
the time taken to perform the matrix multiplication $PA$ plus the time
taken by CPLEX to solve the projected problem. 

\begin{table}
  \begin{center}
    \caption{\small LP: above, IP: below. {\it Acc.}: accuracy (\% feas./infas.~agreement), {\it Orig.}: original (CPU), {\it Proj.}: projected instances (CPU).} 
    \label{tab_results}
 {\small
   \begin{tabular}{ll|rrr|rrr|rrr}
      \hline
      & & \multicolumn{3}{c|}{Uniform} &
      \multicolumn{3}{c|}{Exponential} & \multicolumn{3}{c}{Gamma} \\[-0.2em]
      $m$ & $n$ & Acc. & Orig. & Proj. & Acc. & Orig. & Proj. & Acc. & Orig. & Proj. \\
      \hline
      600  & 1000 & $99.5\%$ & 1.57s & 0.12s & $93.7\%$ & 1.66s & 0.12s & $94.6\%$ & 1.64s & 0.11s \\ [-0.2em]
      700  & 1000 & $99.5\%$ & 2.39s & 0.12s & $92.8\%$ & 2.19s & 0.12s & $93.1\%$ & 2.15s & 0.11s \\ [-0.2em]
      800  & 1000 & $99.5\%$ & 2.55s & 0.11s & $95.0\%$ & 2.91s& 0.11s & $97.3\%$ & 2.78 s& 0.11s\\ [-0.2em]
      900  & 1000 & $99.6\%$ & 3.49s & 0.12s& $96.1\%$ & 3.65s& 0.13s & $97.0\%$ & 3.57s& 0.13s\\ [-0.2em]
      1000 & 1500 & $99.5\%$ & 16.54s & 0.20s & $93.0\%$ & 18.10s & 0.20s & $91.2\%$ & 17.58s & 0.20s\\ [-0.2em]
      1200 & 1500 & $99.6\%$ & 22.46s& 0.23s & $95.7\%$ & 22.46s & 0.20s& $95.7\%$ & 22.58s & 0.22s\\ [-0.2em]
      1400 & 1500 & $100 \%$ & 31.08s& 0.24s& $93.2\%$ & 35.24s& 0.26s& $95.0\%$ & 31.06s& 0.23s\\ [-0.2em]
      1500 & 2000 & $99.4\%$ & 48.89s & 0.30s& $91.3\%$ & 51.23s & 0.31s & $90.1\%$ & 51.08s& 0.31\\ [-0.2em]
      1600 & 2000 & $99.8\%$ & 58.36s& 0.34s& $93.8\%$ & 58.87s & 0.34s& $95.9\%$ & 60.35s& 0.358s\\ 
      \hline
      500 &  800 & $100\%$ & 20.32s& 4.15s& $100\%$ & 4.69s& 10.56s& $100\%$ & 4.25s& 8.11s \\ [-0.2em]
      600 &  800 & $100\%$ & 26.41s& 4.22s& $100\%$ & 6.08s& 10.45s& $100\%$ & 5.96s& 8.27s\\ [-0.2em]
      700 &  800 & $100\%$ & 38.68s & 4.19s& $100\%$ & 8.25s& 10.67s& $100\%$ & 7.93s &  10.28s\\ [-0.2em]
      600 & 1000 & $100\%$ & 51.20s& 7.84s& $100\%$ & 10.31s& 8.47s& $100\%$ & 8.78s& 6.90s\\ [-0.2em]
      700 & 1000 & $100\%$ & 73.73s& 7.86s& $100\%$ & 12.56s& 10.91s& $100\%$ & 9.29s & 8.43s \\ [-0.2em]
      800 & 1000 & $100\%$ & 117.8s& 8.74s& $100\%$ & 14.11s& 10.71s& $100\%$ & 12.29s & 7.58s \\ [-0.2em]
      900 & 1000 & $100\%$ & 130.1s& 7.50s& $100\%$ & 15.58s& 10.75s& $100\%$ & 15.06s& 7.65s\\ [-0.2em]
      1000& 1500 & $100\%$ & 275.8s& 8.84s& $100\%$ & 38.57s& 22.62s& $100\%$ & 35.70s & 8.74s\\ 
      \hline                                              
    \end{tabular}
}
  \end{center}
\end{table}
In the above computational results, we only report the actual solver
execution time (in the case of the original problem) and matrix
multiplication plus solver execution (in the case of the projected
problem).  Lastly, although Tables \ref{tab_results} tell a successful
story, we obtained less satisfactory results with other distributions.
Sparse instances yielded accurate but poorly performant results. So
far, this seems to be a good practical method for dense LP/IP.

\bibliographystyle{plain}
\bibliography{dr2}

\newpage
\end{document}